\newcommand{\jump}[1]{\ensuremath{[\![#1]\!]} }
\newcommand{\avg}[1]{\ensuremath{\left\{\!\!\left\{#1\right\}\!\!\right\}} }
\newcommand{\half}{{\frac{1}{2}}}
\newcommand{\RR}{{\mathbb{R}}}
\newcommand{\CC}{{\mathbb{C}}}
\renewcommand{\div}{{\operatorname{div}}}
\newcommand{\diag}{{\operatorname{diag}}}
\newcommand{\sfA}{{\mathsf{A}}}
\newcommand{\sfM}{{\mathsf{M}}}
\newcommand{\sfU}{{\mathsf{U}}}
\newcommand{\sfI}{{\mathsf{I}}}
\newcommand{\sfLam}{{\mathsf{\Lambda}}}
\begin{document}
\baselineskip=0.95\normalbaselineskip

\title{Rational approximation preconditioners for multiphysics problems }
\titlerunning{Rational approximation multiphysics preconditioners}

\author{
	Ana Budi\v{s}a\inst{1} \and
	Xiaozhe Hu\inst{2} \and
	Miroslav Kuchta\inst{1} \and
	Kent-Andr\'{e} Mardal\inst{1,3} \and
	Ludmil Zikatanov\inst{4}
}
\authorrunning{A. Budi\v{s}a et al.}

\institute{
	Simula Research Laboratory, 0164 Oslo, Norway \\
	\and
	Tufts University, Medford, MA 02155, USA \\
    \and
	University of Oslo, 0316 Oslo, Norway \and
	Penn State University, University Park, PA 16802, USA\\
}
\maketitle
\begin{abstract}
We consider a class of mathematical models describing multiphysics phenomena interacting through interfaces. On such interfaces, the traces of the fields lie (approximately) in the range of a weighted sum of two fractional differential operators. We use a rational function approximation to precondition such operators. We first demonstrate the robustness of the approximation for ordinary functions given by weighted sums of fractional exponents. Additionally, we present more realistic examples utilizing the proposed preconditioning techniques in interface coupling between Darcy and Stokes equations.

\keywords{Rational approximation \and Preconditioning \and Multiphysics.}
\end{abstract}
\section{Introduction}

Fractional operators arise in the context of preconditioning of coupled multiphysics systems and, in particular, in the problem formulations where the coupling constraint is enforced by a Lagrange multiplier defined on the interface. Examples include the so-called EMI equations in modeling of excitable tissue \cite{tveito2017cell}, reduced order models of microcirculation in 2$d$-1$d$ \cite{kuchta2016preconditioners,lamichhane2004mortar} and 3$d$-1$d$ \cite{kuchta2021analysis,kuchta2019preconditioning} setting or Darcy/Biot--Stokes models \cite{ambartsumyan2018lagrange,layton2002coupling}. We remark that fractional operators have been recently utilized also in monolithic solvers for formulations of Darcy/Biot--Stokes models without the Lagrange multipliers, see \cite{boon2022parameter,boon2022robust}.

The coupling in the multiplier formulations is naturally posed in Sobolev spaces of fractional order. However, for parameter robustness of iterative methods, a more precise setting must be considered, where the interface problem is posed in the intersection (space) of parameter-weighted fractional order Sobolev spaces \cite{holter2020robust}. Here the sums of fractional operators induce the natural inner product. In the examples mentioned earlier, however, the interface preconditioners were realized by eigenvalue decomposition, and thus, while being parameter robust, the resulting solvers do not scale with mesh size. Using Darcy--Stokes system as the canonical example, we aim to show that rational approximations are crucial for designing efficient preconditioners for multiphysics systems.

There have been numerous works on approximating/preconditioning problems such as $\mathcal{D}^s u=f$, e.g. \cite{2015BonitoPasciak-a,2016ChenNochettoOtarolaSalgado-a,2016NochettoOtarolaSalgado-a}. The rational approximation (RA) approach has been advocated, and several techniques based on it were proposed by Hofreither in~\cite{hofreitherUnifiedViewNumerical2020,2021Hofreither-a} (where also a \verb|Python| implementation of Best Uniform Rational Approximation (BURA) for $x^{\lambda}$ is found). Further rational approximations used in preconditioning can be found in~\cite{2020HarizanovLazarovMargenovMarinov-a,2020HarizanovLazarovMargenovMarinovPasciak-a}. An interesting approach, which always leads to real-valued poles and uses Pad\'{e} approximations of suitably constructed time-dependent problems, is found in~\cite{lazarov2020_time}. In our numerical tests, we use Adaptive Antoulas-Anderson (AAA) algorithm~\cite{nakatsukasaAAAAlgorithmRational2018} which is a greedy strategy for locating the interpolation points and then using the barycentric representation of the rational interpolation to define a function that is close to the target. In short, for a given continuous function $f$, the AAA algorithm returns a rational function that approximates the best uniform rational approximation to $f$.  

The rest of the paper is organized as follows. In Section~\ref{sec:darcy-stokes-model}, we introduce a model problem that leads to the sum of fractional powers of a differential operator on an interface acting on weighted Sobolev spaces. In Section~\ref{sec:mixed-FEM}, we introduce the finite element discretizations that we employ for the numerical solution of such problems. Next, Section~\ref{sec:ra} presents some details on the rational approximation and the scaling of the discrete operators. In Section~\ref{sec:results}, we test several relevant scenarios and show the robustness of the rational approximation as well as the efficacy of the preconditioners. Conclusions are drawn in Section~\ref{sec:conclusions}. 

\section{Darcy--Stokes model}\label{sec:darcy-stokes-model}
We study interaction between a porous medium occupying $\Omega_D\subset\mathbb{R}^d$, $d=2, 3$ surrounded by a free flow domain $\Omega_S\supset\Omega_D$ given by a Darcy--Stokes model \cite{layton2002coupling} as: For given volumetric source terms $f_S:\Omega_S\rightarrow\mathbb{R}^d$ and $f_D:\Omega_D\rightarrow\mathbb{R}$ find the Stokes velocity and pressure $u_S:\Omega_S\rightarrow\mathbb{R}^d$, $p_S:\Omega_S\rightarrow\mathbb{R}$ and the Darcy flux and pressure $u_D:\Omega_D\rightarrow\mathbb{R}^d$, $p_D:\Omega_D\rightarrow\mathbb{R}$ such that
\begin{equation}\label{eq:darcy_stokes}
  \begin{aligned}
  -\nabla\cdot\sigma(u_S, p_S) = f_S \text{ and } \nabla\cdot u_S &= 0  &\text{ in }\Omega_S, \\
  u_D + K\mu^{-1}\nabla p_D = 0  \text{ and } \nabla\cdot u_D &= f_D  &\text{ in }\Omega_D, \\
  u_S\cdot \nu_s + u_D\cdot \nu_D &= 0  &\text{ on }\Gamma, \\
  -\nu_S\cdot\sigma(u_S, p_s)\cdot\nu_S - p_D &= 0   &\text{ on }\Gamma, \\
  -P_{\nu_S}\left(\sigma(u_S, p_S)\cdot\nu_S\right) - \alpha\mu K^{-1/2}P_{\nu_S}u_S &= 0   &\text{ on }\Gamma.
  \end{aligned}
\end{equation}
Here, $\sigma(u, p):=2\mu\epsilon(u)-pI$ with $\epsilon(u):=\tfrac{1}{2}(\nabla u + \nabla u^{T})$.
Moreover, $\Gamma:=\partial\Omega_D\cap\partial\Omega_S$ is the common interface,
$\nu_S$ and $\nu_D=-\nu_S$ represent the outward unit normal vectors on $\partial\Omega_S$ and $\partial\Omega_D$.  Given a surface with normal vector $\nu$,
$P_{\nu}:=I-\nu\otimes\nu$ denotes a projection to the tangential plane with normal $\nu$. The final three
equations in \eqref{eq:darcy_stokes} represent the coupling conditions on $\Gamma$. Parameters of the model
(which we shall assume to be constant) are viscosity $\mu>0$, permeability $K>0$ and Beavers-Joseph-Saffman
parameter $\alpha>0$. 

Finally, let us decompose the outer boundary $\partial\Omega_S\setminus\Gamma=\Gamma_u\cup\Gamma_\sigma$,
$\lvert\Gamma_i\rvert>0$, $i=u, \sigma$ and introduce the boundary conditions to
close the system \eqref{eq:darcy_stokes}
\begin{equation*}
  \begin{aligned}
    u_S\cdot\nu_S &= 0 \text{ and }
    P_{\nu_S}\left(\sigma(u_S, p_S)\cdot\nu_S\right) = 0&\text{ on }\Gamma_u, \\
    \sigma(u_S, p_S)\cdot\nu_S &= 0&\text{ on }\Gamma_{\sigma}.
  \end{aligned}
\end{equation*}
Thus, $\Gamma_u$ is an impermeable free-slip boundary. We remark
that other conditions, in particular no-slip on $\Gamma_u$, could be considered without introducing additional challenges. However, unlike the tangential component, constraints for the normal component of velocity are easy\footnote{ Conditions on the normal component can be implemented as Dirichlet boundary conditions and enforced by the constructions of the finite element trial and test spaces. The tangential component can be controlled e.g., by the Nitsche method \cite{stenberg1995some} which modifies the discrete problem operator.} to implement in $H(\div)$-conforming discretization schemes considered below.

Letting $V_S\subset H_1(\Omega_S)$, $V_D\subset H(\div, \Omega_D)$,
$Q_S\subset L^2(\Omega_S)$, $Q_D\subset L^2(\Omega_D)$ and $Q\subset H^{1/2}(\Gamma)$ the variational formulation of \eqref{eq:darcy_stokes} seeks to find $w:=(u, p, \lambda)\in W$, $W:=V\times Q\times \Lambda$, $V:=V_S\times V_D$, $Q:=Q_S \times Q_D$ and $u:=(u_S, u_D)$, $p:=(p_S, p_D)$ such that $\mathcal{A}w = L$ in $W'$, the dual space of $W$, where $L$ is the linear functional of the right-hand sides in \eqref{eq:darcy_stokes} and the problem operator $\mathcal{A}$ satisfies
\begin{equation}\label{eq:darcy_stokes_operator}
    \langle \mathcal{A}w, \delta w\rangle = a_S(u_S, v_S) + a_D(u_D, v_D) + b(u, q) + b_{\Gamma}(v, \lambda)+
     b(v, p) + b_{\Gamma}(u, \delta\lambda),
\end{equation}
where $\delta w:=(v, q, \delta\lambda)$, $v:=(v_S, v_D)$, $q:=(q_S, q_D)$ and $\langle\cdot,\cdot\rangle$ denotes a duality pairing between $W$ and $W'$. The bilinear forms in \eqref{eq:darcy_stokes_operator} are defined as 
\begin{equation}\label{eq:darcy_stokes_components}
  \begin{aligned}
    a_S(u_S, v_S) &:= \int_{\Omega_S}2\mu\epsilon(u_S):\epsilon(v_S)\,\mathrm{d}x + \int_{\Gamma}{\alpha\mu}{K^{-1/2}}P_{\nu_S}u_S\cdot P_{\nu_S}v_S\,\mathrm{d}x,\\
    a_D(u_D, v_D) &:= \int_{\Omega_D}{\mu}{K}^{-1}u_D\cdot v_D\,\mathrm{d}x,\\
    b(v, p) &:= -\int_{\Omega_S}p_S\nabla\cdot v_S\,\mathrm{d}x -\int_{\Omega_D}p_D\nabla\cdot v_D\,\mathrm{d}x,\\
    b_{\Gamma}(v, \lambda) &:= \int_{\Gamma}(v_S\cdot \nu_S + v_D\cdot \nu_D)\lambda\,\mathrm{d}s.
  \end{aligned}
\end{equation}
Here, $\lambda$ represents a Lagrange multiplier whose physical meaning is related to the normal component of the traction vector on $\Gamma$, $\lambda:=-\nu_S\cdot\sigma(u_S, p_S)\cdot\nu_S$, see~\cite{layton2002coupling} where also well-posedness of the problem in the space $W$ above is established.

Following \cite{holter2020robust}, parameter-robust preconditioners for Darcy--Stokes operator $\mathcal{A}$
utilize weighted sums of fractional operators on the interface. Specifically, we shall consider the following operator
\begin{equation}\label{eq:darcy_stokes_iface}
  S := {\mu}^{-1}(-\Delta_{\Gamma} + I_{\Gamma})^{-1/2} + {K}{\mu}^{-1}(-\Delta_{\Gamma} + I_{\Gamma})^{1/2},
\end{equation}
where we have used the subscript $\Gamma$ to emphasize that the operators are considered on the interface.
We note that $-\Delta_{\Gamma}$ is singular in our setting as $\Gamma$ is a closed surface and 
adding lower order term $I_{\Gamma}$ thus ensures positivity.

Letting $A_S$ be the operator induced by the bilinear form $a_S$ in \eqref{eq:darcy_stokes_components} we define the Darcy--Stokes preconditioner as follows,
\begin{equation}\label{eq:darcy_stokes_precond}
\mathcal{B} := \diag\left(A_S,\mu {K}^{-1}(I-\nabla\nabla\cdot), {\mu}^{-1}I, {K}{\mu}^{-1}I, S \right)^{-1}.
\end{equation}
Note that the operators $I$ in the pressure blocks of the preconditioner act on different spaces/spatial domains, i.e., $Q_S$ and $Q_D$. We remark that in the context of Darcy--Stokes preconditioning, \cite{harizanov2022rational} consider BURA approximation for a simpler interfacial operator, namely, ${K}{\mu}^{-1}(-\Delta_{\Gamma} + I_{\Gamma})^{1/2}$. However, the preconditioner cannot yield parameter robustness, cf. \cite{holter2020robust}.

\section{Mixed Finite Element Discretization}\label{sec:mixed-FEM}
In order to assess numerically the performance of rational approximation of $S^{-1}$ in \eqref{eq:darcy_stokes_precond}, stable discretization of the Darcy--Stokes system is needed. In addition to parameter variations, here we also wish to show the algorithm's robustness to discretization and, in particular, the construction of the discrete multiplier space. To this end, we require a family of stable finite element discretizations.

Let $k\geq 1$ denote the polynomial degree. For simplicity, to make sure that the Lagrange multiplier fits well with the discretization of both the Stokes and Darcy domain, we employ the same $H(\mbox{div})$ based discretization in both domains. That is, we discretize the Stokes velocity space $V_S$ by Brezzi-Douglas-Marini $\mathbb{BDM}_{k}$ elements \cite{brezzi1985two} over simplicial triangulations $\Omega^h_{S}$ of $\Omega_S$ and likewise, approximations to $V_D$ are constructed with $\mathbb{BDM}_{k}$ elements on $\Omega^h_{D}$. Here, $h$ denotes the characteristic mesh size. Note that by construction, the velocities and fluxes have continuous normal components across the \emph{interior} facets of the respective triangulations. However, on the interface $\Gamma$, we do not impose any continuity between the vector fields. The pressure spaces $Q_S$, $Q_D$ shall be approximated in terms of discontinuous piecewise polynomials of degree $k-1$, $\mathbb{P}^{\text{disc}}_{k-1}$. Finally, the Lagrange multiplier space is constructed by $\mathbb{P}^{\text{disc}}_{k}$ elements on the triangulation $\Gamma^h:=\Gamma^h_S$, with $\Gamma^h_S$ being the trace mesh of $\Omega^h_{S}$ on $\Gamma$. For simplicity, we assume $\Gamma^h_D=\Gamma^h_S$.

Approximation properties of the proposed discretization are demonstrated in \Cref{fig:darcy_stokes_cvrg}. It can be seen that all the quantities converge with order $k$ (or better) in their respective norms. This is particularly the case for the Stokes velocity, where the error is measured in the $H^1$ norm.

Let us make a few remarks about our discretization. First, observe that by using $\mathbb{BDM}$ elements on a global mesh $\Omega^h_{S}\cup\Omega^h_{D}$ the Darcy--Stokes problem \eqref{eq:darcy_stokes} can also be discretized such that the mass conservation condition $u_S\cdot\nu_S+u_D\cdot\nu_D=0$ on $\Gamma$ is enforced by construction, i.e. no Lagrange multiplier is required. Here, $u$ is the global $H(\div)$-conforming vector field, with $u_i:=u|_{\Omega_i}$, $i=S, D$. Second, we note that the chosen discretization of $V_S$ is only $H(\div, \Omega_S)$-conforming. In turn, stabilization of the tangential component of the Stokes velocity is needed, see, e.g., \cite{2014AyusodeDiosBrezziMariniXuZikatanov}, which translates to modification of the bilinear form $a_S$ in \eqref{eq:darcy_stokes_components} as
\begin{equation}\label{eq:sym_grad_BDM}
\begin{split}
a^h_S(u, v) &:= a_S(u, v) - \sum_{e\in F^h_{S}}\int_{e}2\mu \avg{\epsilon(u)}\cdot\jump{P_{\nu_e}v}\,\mathrm{d}s\\
&- \sum_{e\in F^h_{S}}\int_{e}2\mu\avg{\epsilon(v)}\cdot\jump{P_{\nu_e}u}\,\mathrm{d}s
+ \sum_{e\in F^h_{S}}\int_{e}\frac{2\mu\gamma}{h_e}\jump{P_{\nu_e}u}\cdot \jump{P_{\nu_e}v}\,\mathrm{d}s,
\end{split}
\end{equation}
see \cite{hong2016robust}. Here, $F^{h}_S=F(\Omega^h_S)$ is the collection of interior facets $e$ of triangulation $\Omega^h_S$, while $\nu_e$, $h_e$ denote respectively the facet normal and facet diameter. The stabilization parameter $\gamma>0$ has to be chosen large enough to ensure the coercivity of $a^h_S$. However, the value depends on the polynomial degree. Finally, for interior facet $e$ shared by elements $T^{+}$, $T^{-}$ we define the (facet) jump of a vector $v$ as $\jump{v}:=v|_{T^{+}\cap e}-v|_{T^{-}\cap e}$ and the (facet) average of a tensor $\epsilon$ as $\avg{\epsilon}:=\tfrac{1}{2}\left(v|_{T^{+}\cap e}\cdot\nu_e+v|_{T^{-}\cap e}\cdot\nu_e\right)$.

We conclude this section by discussing discretization of the operator $(-\Delta+I)$ needed for realizing the interface preconditioner \eqref{eq:darcy_stokes_iface}. Since, in our case, the multiplier space $\Lambda_h$ is only $L^2$-conforming we adopt the symmetric interior penalty approach \cite{di2011mathematical} so that in turn for $u, v\in \Lambda_h$,
\begin{equation}\label{eq:delta_dg}
 \begin{split}
  \langle (-\Delta+I) u, v\rangle := &\int_{\Gamma}(\nabla u\cdot \nabla v  + u v)\,\mathrm{d}x
- \sum_{e\in F(\Gamma_h)}\int_{e}\avg{\nabla u} \jump{v}\,\mathrm{d}s\\
&- \sum_{e\in F(\Gamma_h)}\int_{e}\avg{\nabla v} \jump{u}\,\mathrm{d}s
+ \sum_{e\in F(\Gamma_h)}\int_{e}\frac{\gamma}{h_e}\jump{u}\jump{v}\,\mathrm{d}s.
\end{split}
\end{equation}
Here, the jump of a scalar $f$ is computed as $\jump{f}:=f|_{T^{+}\cap e}-f|_{T^{-}\cap e}$ and the average of a vector $v$ reads $\avg{v}:=\tfrac{1}{2}\left(v|_{T^{+}\cap e}\cdot\nu_e+v|_{T^{-}\cap e}\cdot\nu_e\right)$. As before, $\gamma>0$ is a suitable stabilization parameter.

\begin{figure}
  \centering
  \includegraphics[width=0.325\textwidth]{./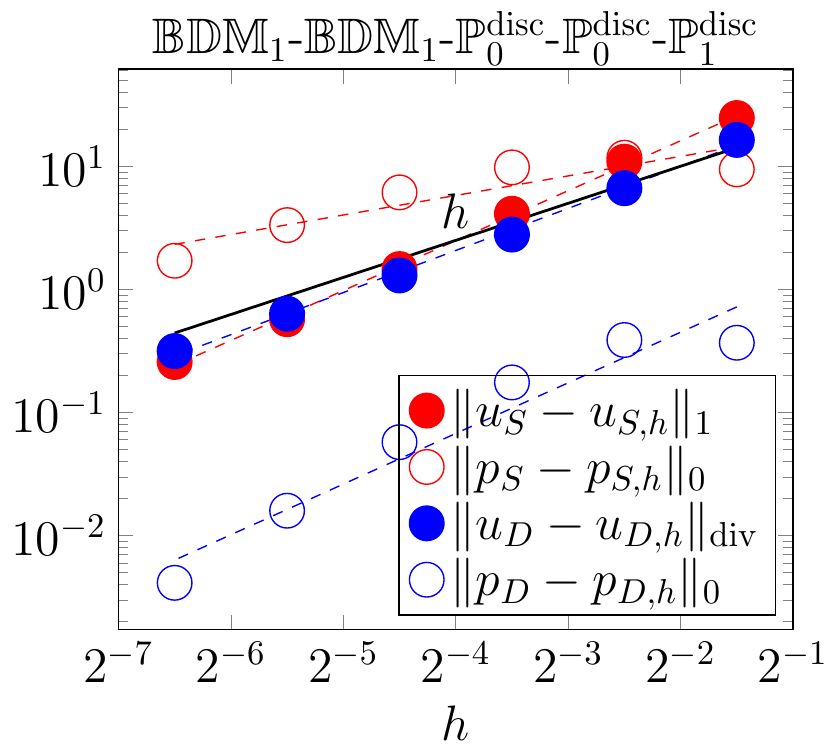}
  \includegraphics[width=0.325\textwidth]{./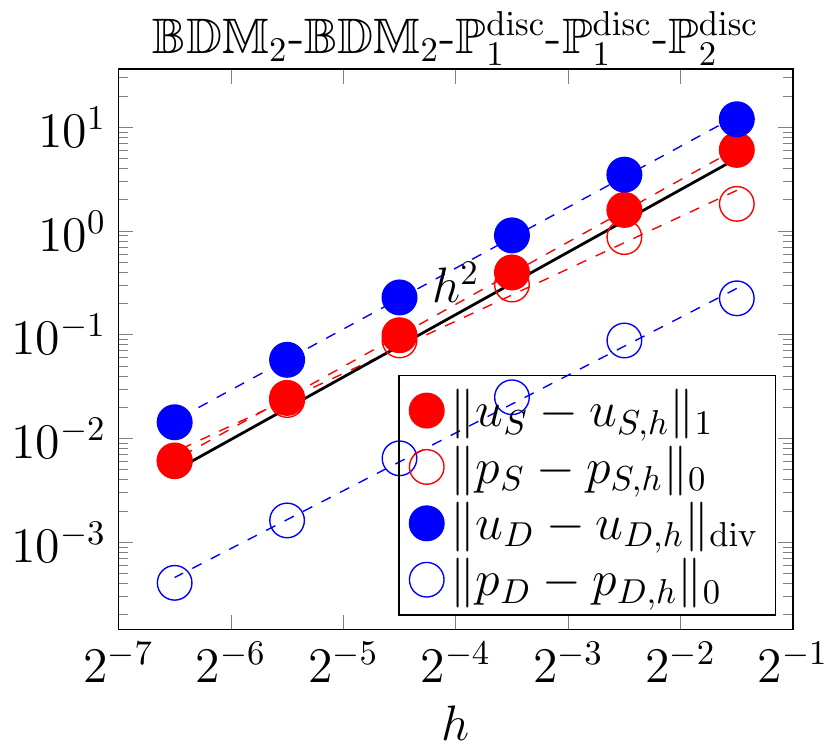}
  \includegraphics[width=0.325\textwidth]{./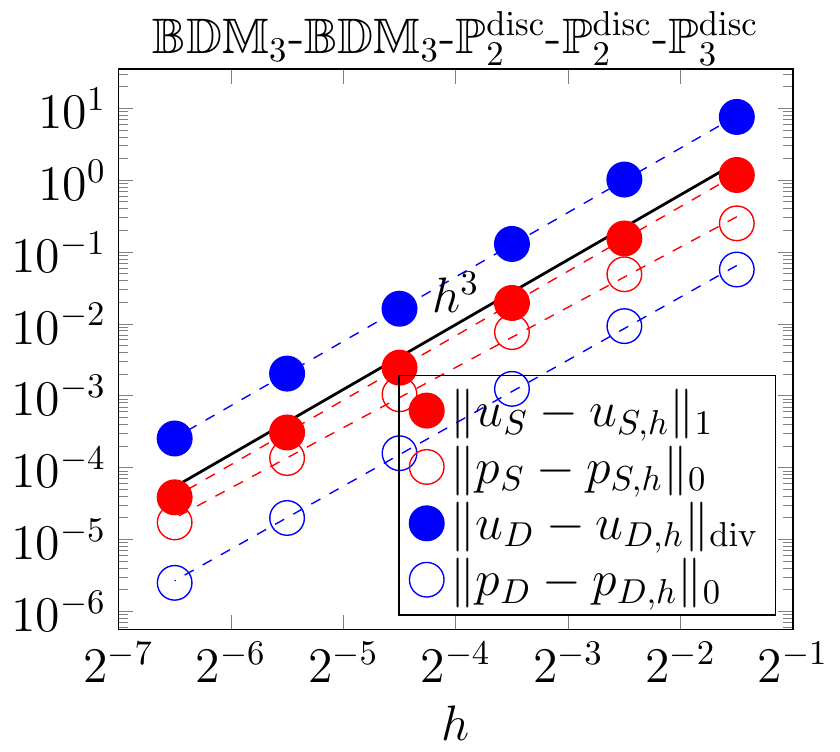}
  \vspace{-10pt}
  \caption{Approximation properties of $\mathbb{BDM}_k$-$\mathbb{BDM}_k$-$\mathbb{P}^{\text{disc}}_{k-1}$-$\mathbb{P}^{\text{disc}}_{k-1}$-$\mathbb{P}^{\text{disc}}_k$ discretization of the Darcy--Stokes problem. Two-dimensional setting is considered with $\Omega_S=(0, 1)^2$ and $\Omega_D=(\tfrac{1}{4}, \tfrac{3}{4})^2$. Parameter values are set as $K=2$, $\mu=3$ and $\alpha=\tfrac{1}{2}$. Left figure is for $k=1$ while $k=2,3$ is shown in the middle and right figures,  respectively. 
  }
  \label{fig:darcy_stokes_cvrg}
\end{figure}

\section{Rational approximation for the general problems of sums of fractional operators} \label{sec:ra}
Let $s, t \in [-1, 1]$ and $\alpha, \beta \geq 0$ where at least one of $\alpha, \beta$ is not zero. For interval $I \subset \mathbb{R}^+$, consider a function
\( 
	f(x) = (\alpha x^s + \beta x^t)^{-1}, \quad x \in I.
\).
The basic idea is to find a rational function $R(x)$ approximating $f$ on $I$, that is,
\(
	R(x) = \frac{P_{k'}(x)}{Q_{k}(x)}\approx f(x)
\),
where $P_{k'}$ and $Q_{k}$ are polynomials of degree $k'$ and $k$, respectively. Assuming $ k' \leq k $, the rational function can be given in the following partial fraction form
\begin{equation*} 
	R(x) = c_0 + \sum_{i=1}^N \frac{c_i}{x - p_i},
\end{equation*}
for $ c_0 \in \RR $, $ c_i, p_i \in \CC $, $ i = 1, 2, \dots, N$. The coefficients $p_i$ and $c_i$ are called \emph{poles} and \emph{residues} of the rational approximation, respectively.

We note that the rational approximation has been predominantly explored to approximate functions with only one fractional power, that is $x^{-\bar{s}}$ for $\bar{s} \in (0, 1) $ and $ x > 0 $. Additionally, the choice of the rational approximation method that computes the poles and residues is not unique. One possibility is the BURA method which first computes the best uniform rational approximation $\bar{r}_{\beta}(x)$ of $x^{\beta -\bar{s}}$ for a positive integer $\beta > \bar{s}$ and then uses $\bar{r}(x) = \frac{\bar{r}_{\beta}(x)}{x}$ to approximate $x^{-\bar{s}}$. Another possible choice is to use the rational interpolation of $z^{-\bar{s}}$ to obtain $\bar{r}(x)$. The AAA algorithm proposed in \cite{nakatsukasaAAAAlgorithmRational2018} is a good candidate. The AAA method is based on the representation of the rational approximation in barycentric form and greedy selection of the interpolation points. Both approaches lead to the poles $ p_i \in \RR, \, p_i \leq 0 $ for the case of one fractional power. An overview of rational approximation methods can be found in \cite{hofreitherUnifiedViewNumerical2020}. 

The location of the poles is crucial in rational approximation preconditioning. For $\bar{f}(x) = x^{\bar{s}}$, $\bar{s} \in (0,1)$ the poles of the rational approximation for $\bar{f}$ are all real and negative. Hence, in the case of a positive definite operator $\mathcal{D}$, the approximation of $\mathcal{D}^{\bar{s}}$ requires only inversion of positive definite operators of the form $\mathcal{D}+|p_i| I$, for $i = 1, 2, \dots, N$, $p_i \neq 0 $. Such a result for rational approximation of $\bar{f}$ with $\bar{s}\in(-1,1)$ is found in a paper by H.~Stahl~\cite{2003Stahl-a}.  In the following, we present an extensive set of numerical tests for the class of \emph{sum of fractional operators}, which gives a wide class of efficient preconditioners for the multiphysics problems coupled through an interface. The numerical tests show that the poles remain real and nonpositive in most combinations of fractional exponents $s$ and $t$.

Let $ V $ be a Hilbert space and $ V'$ be its dual. Consider a symmetric positive definite (SPD) operator $ A : V \to V' $. Then, the rational function $ R(\cdot) $ can be used to approximate $ f(A) $ as follows,
\begin{equation*}
	z = f(A) r \approx c_0 r + \sum_{i=1}^N c_i \left( A - p_i I \right)^{-1}r
\end{equation*}
with $ z \in V $ and $r \in V'$. The overall algorithm is shown in \Cref{alg:rational-approx}.
\begin{algorithm}
	\caption{Compute $ z = f(A) r $ using rational approximation.} \label{alg:rational-approx}
	\begin{algorithmic}[1]
		\STATE Solve for $ w_i $:
		$ \left( A - p_i I \right) w_i = r, \quad i = 1, 2, \dots, N. $
		\STATE Compute: $ z = c_0 r + \sum\limits_{i=1}^N c_i w_i $
	\end{algorithmic}
\end{algorithm}

Without loss of generality we let the operator $ A $ be a discretization of the Laplacian operator $ -\Delta $,
and $ I $ is the identity defined using the standard $L^2$ inner product. In particular, unlike in \eqref{eq:darcy_stokes_iface}
we assume that $ -\Delta $ is SPD (e.g. by imposing boundary conditions eliminating the constant nullspace).
Therefore, the equations in Step 1 of \Cref{alg:rational-approx} can be viewed as discretizations
of the shifted Laplacian problems $ -\Delta \, w_i - p_i \, w_i  = r $, $p_i<0$, and we can use
efficient numerical methods, such as Algebraic MultiGrid (AMG) methods \cite{1stAMG,2017XuZikatanov}, for their solution.

We would like to point out that in the implementation, the operators involved in \Cref{alg:rational-approx} are replaced by their matrix representations on a concrete basis and are properly scaled. We address this in more detail in the following section.

\subsection{Preconditioning} \label{subsec:ra_preconditioning}

Let $ \sfA $ be the stiffness matrix associated with $ -\Delta$ and $\sfM$ a corresponding mass matrix of the $L^2$ inner product. Also, denote with $ n_c $ the number of columns of $ \sfA $. The problem we are interested in is constructing an efficient preconditioner for the solution of the linear system $F(\sfA) \mathsf{x} = \mathsf{b}$.
Thus, we would like to approximate $ f(\sfA) = F(\sfA)^{-1} $ using the rational approximation $ R(x) $ of $ f(x)=\frac{1}{F(x)} $. 

Let $ \sfI $ be a $ n_c \times n_c $ identity matrix and let $ \sfU $ be an $ \sfM $-orthogonal matrix of the eigenvectors of the generalized eigenvalue problem $ \sfA \mathsf{u}_j = \lambda_j \sfM \mathsf{u}_j $, $ j = 1, 2, \dots, n_c $, namely,
\begin{equation}\label{eq:gep_ra}
	\sfA \sfU = \sfM \sfU \sfLam, \quad \sfU^T \sfM \sfU = \sfI \quad \Longrightarrow\quad \sfU^T \sfA \sfU = \sfLam.
\end{equation}
For any continuous function $ G : [0, \rho] \rightarrow \mathbb{R}$ we define
\begin{equation}\label{eq:function_gep}
	G(\sfA) \coloneqq \sfM \sfU G(\sfLam) \sfU^T \sfM, \quad\mbox{i.e.}, \quad
 	f(\sfA)=\sfM \sfU f(\sfLam) \sfU^T \sfM.
\end{equation}
where $ \rho \coloneqq \rho \left(\sfM^{-1} \sfA \right) $ is the spectral radius of the matrix $ \sfM^{-1} \sfA $. 

A simple consequence from the Chebyshev Alternation Theorem is that the residues/poles for the rational approximation of $f(x)$ for $x\in [0,\rho]$ are obtained by scaling the residues/poles of the rational approximation of $ g(y) = f\left( \rho y \right) $ for $ y \in [0,1] $. Indeed, we have, $c_i(f)=\rho c_i(g)$, and $p_i(f)=\rho p_i(g)$. Therefore, in the implementation, we need an upper bound on $ \rho \left( \sfM^{-1}\sfA \right) $. For $\mathbb{P}_1 $ finite elements, such a bound can be obtained following the arguments from \cite{wathen1987realistic},
\begin{equation}\label{diag}
	\rho\left(\sfM^{-1} \sfA\right)
	\le \frac{1}{\lambda_{\min{}}(\sfM)} \|\sfA\|_{\infty}
	=d(d+1)\left\|\operatorname{diag}(\sfM)^{-1}\right\|_{\infty} \| \sfA \|_{\infty}
\end{equation}
with $d$ the spatial dimension\footnote{Such estimates can be carried out for Lagrange finite elements of any polynomial degree because the local mass matrices are of a special type: a constant matrix, which depends only on the dimension and the polynomial degree, times the volume of the element.}.

\begin{proposition} \label{prop:ra_for_matrix}
	Let $ R_f(\cdot) $ be the rational approximation for the function $ f(\cdot) $ on $ [0, \rho] $. Then for the stiffness matrix $ \sfA $ and mass matrix $ \sfM $ satisfying \eqref{eq:gep_ra}, we have
		\begin{equation} \label{eq:ra_for_matrix}
			f(\sfA) \approx R_f(\sfA) = c_0 \sfM^{-1} + \sum_{i=1}^N c_i \left(\sfA - p_i \sfM \right)^{-1}.
		\end{equation}
\end{proposition}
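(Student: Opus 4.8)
The plan is to prove \eqref{eq:ra_for_matrix} as an exact identity for the proposed right-hand side and then obtain the approximation $f(\sfA)\approx R_f(\sfA)$ from the uniform scalar estimate $R_f\approx f$ on $[0,\rho]$. The whole argument lives in the $\sfM$-orthogonal eigenbasis from \eqref{eq:gep_ra}, so the first move is to reduce every term of the partial fraction $R_f(x)=c_0+\sum_{i=1}^N c_i/(x-p_i)$ to the common conjugated form $\sfU(\cdot)\sfU^T$.

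First I would record the two algebraic consequences of \eqref{eq:gep_ra}. From $\sfU^T\sfM\sfU=\sfI$ the matrix $\sfU$ is invertible with $\sfU^{-1}=\sfU^T\sfM$, hence $\sfU^{-T}=\sfM\sfU$ and $\sfU\sfU^T=\sfM^{-1}$; the latter handles the constant term, giving $c_0\sfM^{-1}=c_0\sfU\sfI\sfU^T$. For the pole terms I would use the congruence
\[
\sfU^T(\sfA-p_i\sfM)\sfU=\sfU^T\sfA\sfU-p_i\,\sfU^T\sfM\sfU=\sfLam-p_i\sfI,
\]
which follows from the two relations $\sfU^T\sfA\sfU=\sfLam$ and $\sfU^T\sfM\sfU=\sfI$ in \eqref{eq:gep_ra}. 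Since $\sfLam-p_i\sfI$ is diagonal, inverting and conjugating back by $\sfU^{-1}=\sfU^T\sfM$ yields the key per-pole identity $(\sfA-p_i\sfM)^{-1}=\sfU(\sfLam-p_i\sfI)^{-1}\sfU^T$.

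Next I would substitute these into the proposed formula and collect the outer factors:
\[
c_0\sfM^{-1}+\sum_{i=1}^N c_i(\sfA-p_i\sfM)^{-1}
=\sfU\Big(c_0\sfI+\sum_{i=1}^N c_i(\sfLam-p_i\sfI)^{-1}\Big)\sfU^T
=\sfU\,R_f(\sfLam)\,\sfU^T,
\]
where $R_f(\sfLam)$ is the diagonal matrix with entries $R_f(\lambda_j)$ because $\sfLam=\diag(\lambda_j)$. This is precisely $R_f$ evaluated through the spectral calculus of \eqref{eq:function_gep}, read in the inverse sense appropriate to the preconditioner $f(\sfA)$. The approximation then reduces to a scalar statement on the spectrum: the discrepancy between using $f$ and $R_f$ is $\sfU\big(f(\sfLam)-R_f(\sfLam)\big)\sfU^T$, and because every generalized eigenvalue satisfies $\lambda_j\in[0,\rho]$, its size is governed by $\max_j|f(\lambda_j)-R_f(\lambda_j)|\le\|f-R_f\|_{L^\infty[0,\rho]}$, which is small by construction of the rational approximant.

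The routine caveats to dispatch are the invertibility of each shifted matrix $\sfA-p_i\sfM$ — guaranteed because the poles are real and nonpositive, so $\sfA-p_i\sfM$ is SPD — and, if a complex-conjugate pair of poles appears, pairing the corresponding terms to keep $R_f(\sfA)$ real. The step I expect to require the most care is not any single estimate but the bookkeeping of the mass-matrix weights: one must check that the factors of $\sfM$ forced by the \emph{generalized} (rather than standard) eigenproblem cancel in the partial-fraction sum exactly as needed for \eqref{eq:function_gep} and the action formula \eqref{eq:ra_for_matrix} to describe the same operator, i.e. that the $\sfM^{-1}$ in the constant term and the $\sfU(\cdot)\sfU^T$ form of the resolvents are mutually consistent.
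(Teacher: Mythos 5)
Your argument is correct and follows essentially the same route as the paper's proof: both rest on the single identity $(\sfA - p_i \sfM)^{-1} = \sfU(\sfLam - p_i\sfI)^{-1}\sfU^{T}$ derived from the generalized eigenrelations \eqref{eq:gep_ra}, together with $\sfU\sfU^{T}=\sfM^{-1}$ for the constant term, followed by summing the partial fractions in the $\sfM$-orthogonal eigenbasis. Your extra remarks on the mass-matrix bookkeeping and on the invertibility of the shifted matrices only make explicit what the paper's proof leaves implicit.
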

\begin{proof}
The relations \eqref{eq:gep_ra}	imply that $ \sfU \sfU^T = \sfM^{-1}$, and therefore, 
\begin{equation*} 
			\sfA \sfU - p_i \sfM \sfU = \sfM \sfU (\sfLam - p_i \sfI)
      \Longleftrightarrow 
			(\sfA - p_i \sfM)\sfU = \sfM \sfU(\sfLam - p_i \sfI).
   \end{equation*}
   This is equivalent to \(
   (\sfLam - p_i \sfI)^{-1}\underbrace{\sfU^{-1} \sfM^{-1}}_{\sfU^T}  = \sfU^{-1}(\sfA - p_i \sfM)^{-1}
   \). Hence, 
   \begin{equation*}
			(\sfLam - p_i \sfI)^{-1} = \sfU^{-1}(\sfA - p_i \sfM)^{-1} \sfU^{-T}.
\end{equation*}
A straightforward substitution in~\eqref{eq:function_gep} then shows \eqref{eq:ra_for_matrix}.
	\qed
\end{proof}

In addition, to apply the rational approximation preconditioner, we need to compute the actions of $\sfM^{-1}$ and each $\left(\sfA - \rho p_i \sfM \right)^{-1}$. If $ p_i \in \RR, \, p_i \leq 0$, this leads to solving a series of elliptic problems where the AMG methods are very efficient. 

\section{Numerical results} \label{sec:results}
In this section, we present two sets of experiments: (1) on the robustness of the rational approximation with respect to the scaling parameters and the fractional exponents; and (2) on the efficacy of the preconditioned minimal residual (MinRes) method as a solver for Darcy--Stokes coupled model. We use the AAA algorithm \cite{nakatsukasaAAAAlgorithmRational2018} to construct a rational approximation. The discretization and solver tools are \verb|Python| modules provided by FEniCS\textunderscore{ii} \cite{kuchta2021assembly}, cbc.block \cite{mardal2012block}, and interfaced with the HAZmath library \cite{hazmath}.

\subsection{Approximating the sum of two fractional exponents} \label{subsec:example_1}

In this example, we test the approximation power of the rational approximation computed by AAA algorithm regarding different fractional exponents $s, t$ and parameters $\alpha, \beta$. That is, we study the number of poles $N $ required to achieve
\begin{equation*}
	\| f - R \|_{\infty} = \max\limits_{x \in [0, 1]} \left| (\alpha x^s + \beta x^t)^{-1} - \left(c_0 + \sum_{i=1}^N \frac{c_i}{x - p_i} \right) \right| \leq \epsilon_{\text{RA}},
\end{equation*}
for a fixed tolerance $ \epsilon_{\text{RA}} = 10^{-12}$. In this case, we consider the fractional function $ f $ to be defined on the unit interval $I = (0, 1]$. As we noted earlier, however, the approximation can be straightforwardly extended to any interval. We also consider the scaling regarding the magnitude of parameters $ \alpha $ and $ \beta $. Specifically, in case when $ \alpha > \beta $, we rescale the problem with $ \gamma_\alpha = \frac{\beta}{\alpha} < 1 $ and approximate
\begin{equation*}
	\widetilde{f}(x) = (x^s + \gamma_\alpha x^t)^{-1} \approx R_{\widetilde{f}}(x).
\end{equation*}
Then the rational approximation for the original function $ f $ is given as
\begin{equation*}
	f(x) \approx R(x) = \frac{1}{\alpha} R_{\widetilde{f}}(x).
\end{equation*}
Similar can be done in case when $ \beta > \alpha $.

\begin{figure}[t!]
    \centering
    \includegraphics[width=\textwidth]{./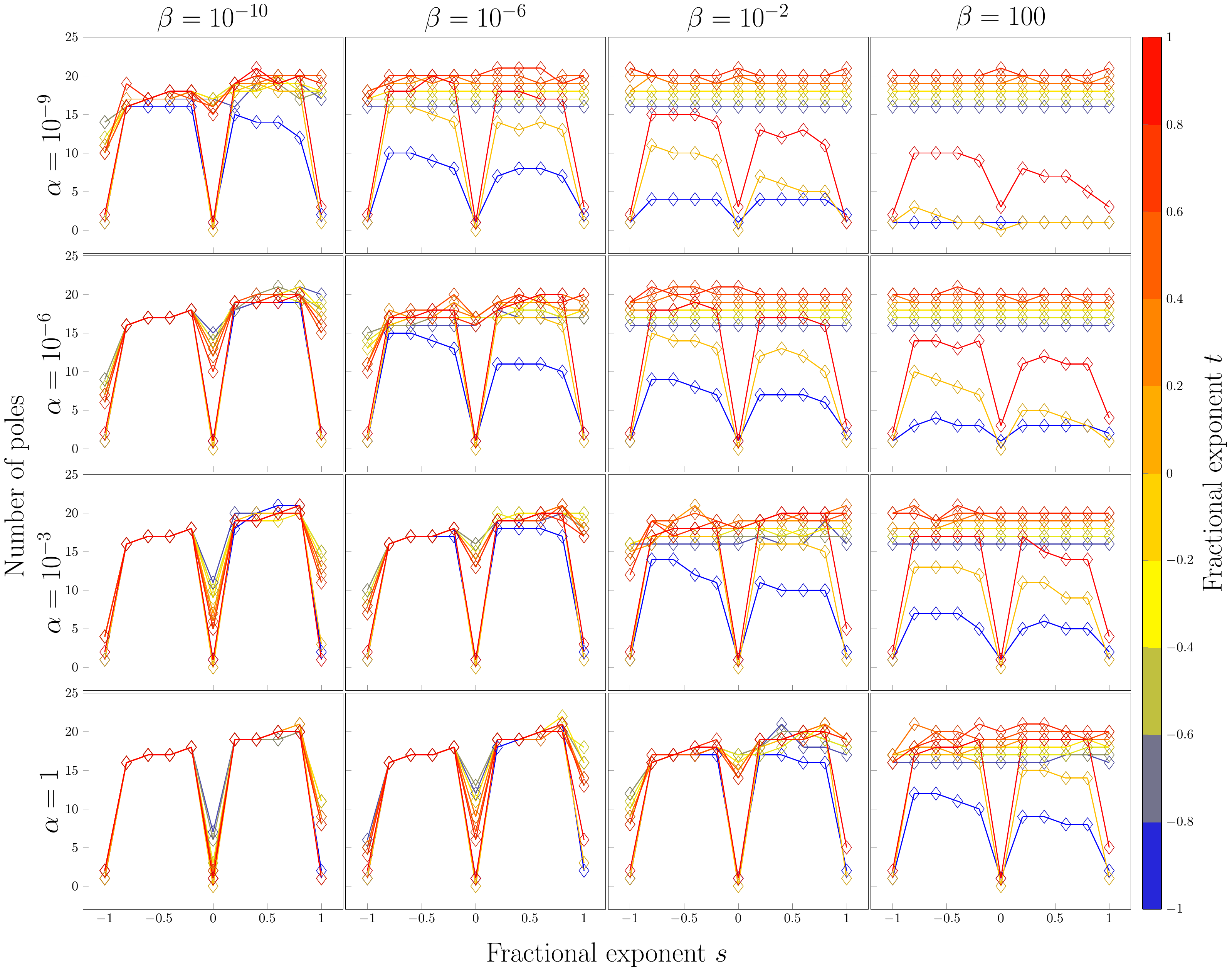}
    \vspace{-25pt}
    \caption{Visualization of the number of poles in the rational approximations of the function $f(x) = (\alpha x^s + \beta x^t)^{-1}$ for $ x \in(0, 1] $ with regards to varying fractional exponents $s, t$ and coefficients $\alpha, \beta$.}
    \label{fig:number_of_poles}
 \end{figure}

The results are summarized in \Cref{fig:number_of_poles}. To obtain different parameter ratios $\gamma_\alpha$, we take $\alpha \in \{10^{-9}, 10^{-6}, 10^{-3}, 1 \} $ and $ \beta \in \{10^{-10}, 10^{-6}, 10^{-2}, 10^2 \} $. Furthermore, we vary the fractional exponents $s, t \in [-1, 1] $ with the step 0.2. We observe that the number of poles $N$ remains relatively uniform with varying the exponents, except in generic cases when $ s, t = \{-1, 0, 1 \}$. For example, for the combination $(s, t) = (1, 1)$, the function we are approximating is $ f(x) = \tfrac{1}{2x} $, thus the rational approximation should return only one pole $ p_1 = 0 $ and residues $ c_0 = 0, c_1 = \half $. We also observe that for the fixed tolerance of $ \epsilon_{\text{RA}} = 10^{-12}$, we obtain a maximum of 22 poles in all cases.

Additionally, we remark that in most test cases, we retain real and negative poles, which is a desirable property to apply the rational approximation as a positive definite preconditioner. However, depending on the choice of fractional exponents $s, t$ and tolerance $\epsilon_{\text{RA}}$, the algorithm can produce positive or a pair of complex conjugate poles. Nevertheless, these cases are rare, and the number and the values of those poles are small. Therefore numerically, we do not observe any significant influence on the rational approximation preconditioner. More concrete analytical results on the location of poles for sums of fractionalities are part of our future research.

\subsection{The Darcy--Stokes problem}\label{subsec:example_ds}
In this section, we discuss the performance of RA approximation of $S^{-1}$ in the Darcy--Stokes preconditioner \eqref{eq:darcy_stokes_precond}. To this end, we consider $\Omega_S=(0, 1)^2$, $\Omega_D=(\tfrac{1}{4}, \tfrac{3}{4})^2$ and fix the value of Beavers-Joseph-Saffman parameter $\alpha=1$ while permeability and viscosity  are varied\footnote{These ranges are identified as relevant for many applications in biomechanics \cite{boon2022robust}.} $10^{-6}\leq K\leq 1$ and $10^{-6}\leq\mu\leq 10^{2}$. The system is discretized by $\mathbb{BDM}_k$-$\mathbb{BDM}_k$-$\mathbb{P}^{\text{disc}}_{k-1}$-$\mathbb{P}^{\text{disc}}_{k-1}$-$\mathbb{P}^{\text{disc}}_k$ elements, $k=1, 2, 3$, which were shown to provide convergent approximations in \Cref{fig:darcy_stokes_cvrg}. A hierarchy of meshes $\Omega^h_S$, $\Omega^h_D$ is obtained by uniform refinement. We remark that the stabilization constants $\gamma$ in \eqref{eq:sym_grad_BDM} and \eqref{eq:delta_dg} are chosen as $\gamma=20k$ and $\gamma=10k$ respectively. The resulting linear systems are then solved by preconditioned MinRes method. To focus on the RA algorithm in the preconditioner, for all but the multiplier block, an exact LU decomposition is used. Finally, the iterative solver is always started from a zero initial guess and terminates when the preconditioned residual norm drops below $10^{-10}$.

We present two sets of experiments. First, we fix the tolerance in the RA algorithm at $\epsilon_{\text{RA}}=2^{-40}\approx 10^{-12}$ and demonstrate that using RA in the preconditioner \eqref{eq:darcy_stokes_precond} leads to stable MinRes iterations for any practical values of the material parameters, the mesh resolution $h$ and the polynomial degree $k$ in the finite element discretization (see \Cref{fig:darcy_stokes_param_robust}). Here it can be seen that the number of iterations required for convergence is bounded in the above-listed quantities. Results for different polynomial degrees are largely similar and appear to be mostly controlled by material parameters. However, despite the values of $K$, $\mu$ spanning several orders of magnitude, the iterations vary only between 30 to 100.

\begin{figure}[ht!]
  \centering
  \includegraphics[width=\textwidth]{./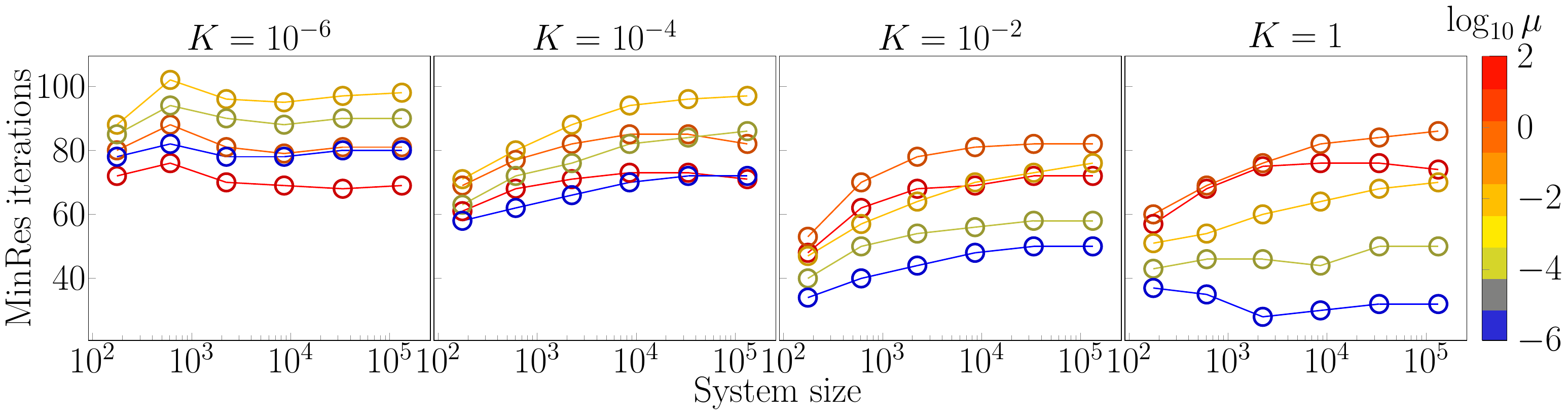}
  \includegraphics[width=\textwidth]{./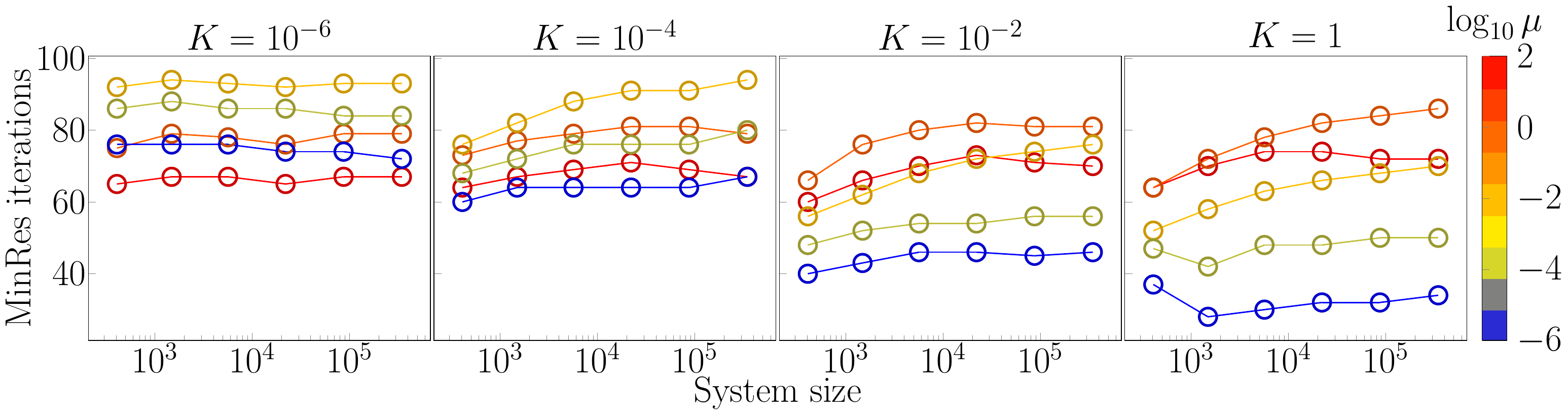}
  \includegraphics[width=\textwidth]{./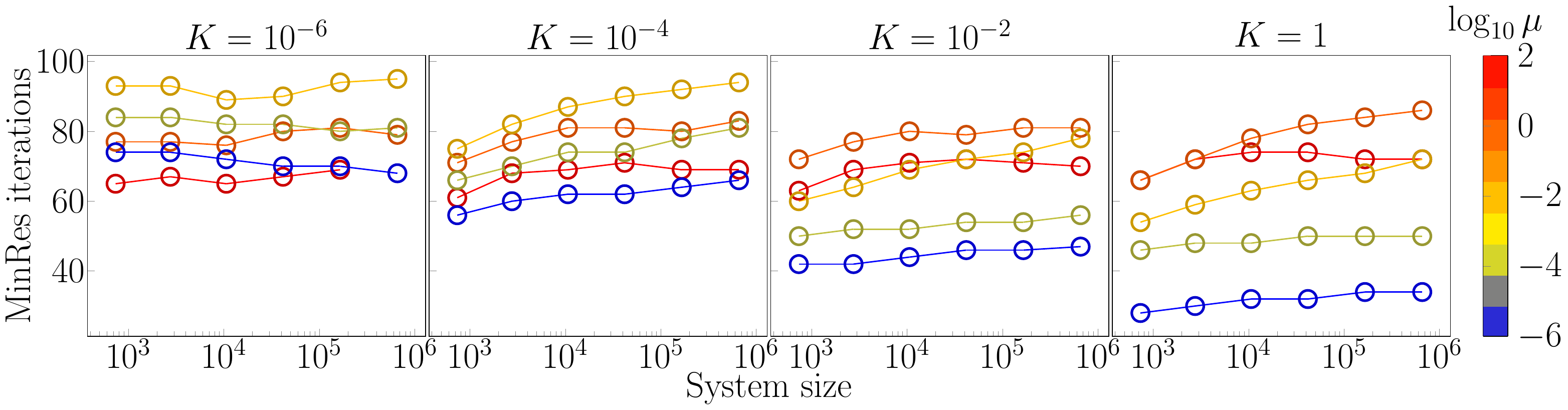}
  \vspace{-25pt}
  \caption{
    Number of MinRes iterations required for convergence with preconditioner \eqref{eq:darcy_stokes_precond} using the RA with tolerance $\epsilon_{\text{RA}}=2^{-40}$. Setup from \Cref{fig:darcy_stokes_cvrg} is considered with the system discretized by  $\mathbb{BDM}_k$-$\mathbb{BDM}_k$-$\mathbb{P}^{\text{disc}}_{k-1}$-$\mathbb{P}^{\text{disc}}_{k-1}$-$\mathbb{P}^{\text{disc}}_k$
    elements. (Top) $k=1$, (middle) $k=2$, (bottom) $k=3$.
  }
  \label{fig:darcy_stokes_param_robust}
\end{figure}

Next, we assess the effects of the accuracy in RA on the performance of preconditioned MinRes solver. Let us fix $k=1$ and vary the material parameters as well as the RA tolerance $\epsilon_{\text{RA}}$. In \Cref{fig:darcy_stokes_ra_robust}, we observe that the effect of $\epsilon_{\text{RA}}$ varies with material properties (which enter the RA algorithm through scaling). In particular, it can be seen that for $K=1$, the number of MinRes iterations is practically constant for any $\epsilon_{\text{RA}}\leq 10^{-1}$. On the other hand,  when $K=10^{-6}$, the counts vary with $\epsilon_{\text{RA}}$ and to a lesser extent with $\mu$. Here, lower accuracy typically leads to a larger number of MinRes iterations. However, for $\epsilon_{\text{RA}}\leq 10^{-4}$ the iterations behave similarly. We remark that with $K=10^{-6}$ (and any $10^{-6}\leq\mu\leq 10^{2}$), the tolerance $\epsilon_{\text{RA}}=10^{-1}$ leads to 2 poles, cf. \Cref{fig:darcy_stokes_ra_perf},  while for $K=1$ there are at least 5 poles needed in the RA approximation.

\begin{figure}
  \centering
  \includegraphics[width=\textwidth]{./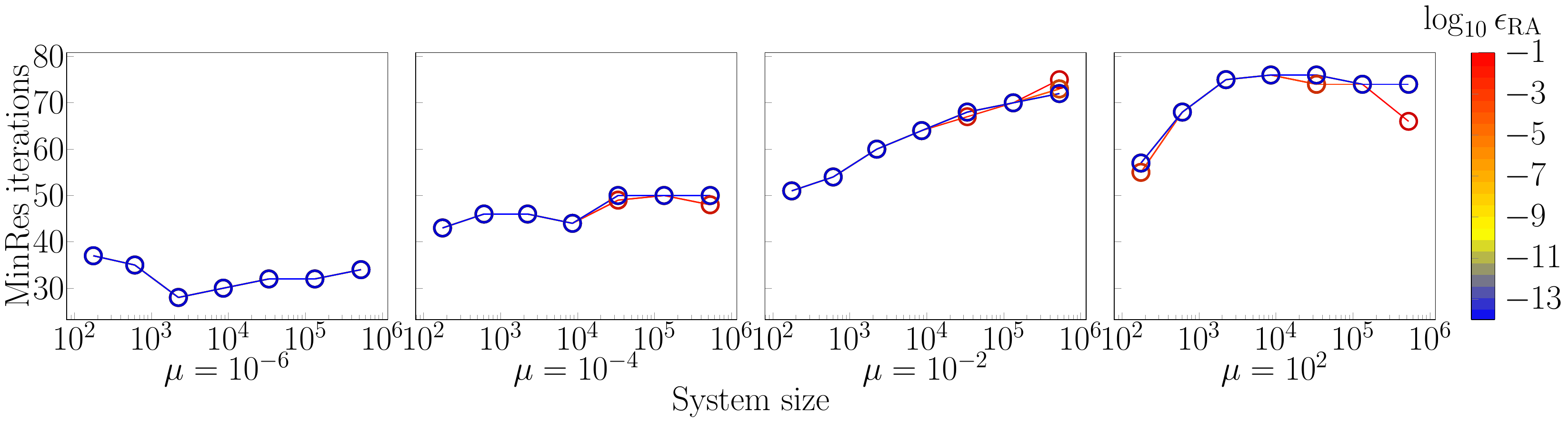}
  \includegraphics[width=\textwidth]{./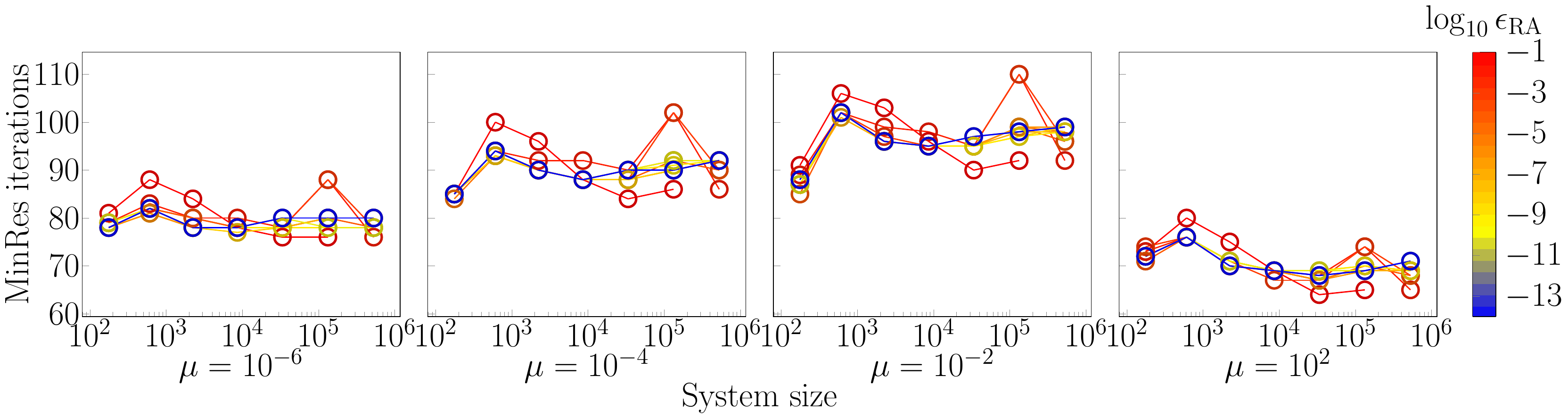}
  \vspace{-25pt}  
  \caption{
    Number of MinRes iterations required for convergence with preconditioner
    \eqref{eq:darcy_stokes_precond} using the RA with varying tolerance $\epsilon_{\text{RA}}$.
    (Top) $K=1$. (Bottom) $K=10^{-6}$. Setup as in \Cref{fig:darcy_stokes_param_robust} is used with discretization by
    $\mathbb{BDM}_1$-$\mathbb{BDM}_1$-$\mathbb{P}^{\text{disc}}_{0}$-$\mathbb{P}^{\text{disc}}_{0}$-$\mathbb{P}^{\text{disc}}_1$ elements.
  }
  \label{fig:darcy_stokes_ra_robust}
\end{figure}

Our results demonstrate that RA approximates $S^{-1}$ in \eqref{eq:darcy_stokes_iface}, which leads to a robust, mesh, and parameter-independent Darcy--Stokes solver. We remark that though the algorithm complexity is expected to scale with the number of degrees of freedom on the interface, $n_{h}=\text{dim}\Lambda_h$, which is often considerably smaller than the total problem size, the setup cost may become prohibitive. This is particularly true for spectral realization, which often results in $\mathcal{O}(n^3_h)$ complexity. To address such issues, we consider how the setup time of RA and the solution time of the MinRes solver depend on the problem size. 

In \Cref{fig:darcy_stokes_ra_perf} we show the setup time of RA (for fixed material parameters) as function of mesh size and $\epsilon_{\text{RA}}$. It can be seen that the times are $<0.1\,\text{s}$ and practically constant with $h$ (and $n_{h}$). As with the number of poles, the small variations in the timings with $h$ are likely due to different scaling of the matrices $\sfA$ and $\sfM$. We note that in our experiments $32\leq n_h\leq 1024$. Moreover, since $\Lambda_h$ is in our experiments constructed from $\mathbb{P}^{\text{disc}}_1$ we apply the estimate \eqref{diag}. 

In \Cref{fig:darcy_stokes_ra_perf}, we finally plot the dependence of the solution time of the preconditioned MinRes solver on the problem size. Indeed we observe that the solver is of linear complexity. In particular, application of rational approximation of $S^{-1}$ in our implementation requires $\mathcal{O}(n_h)$ operations. We remark that here the solvers for the shifted Laplacian problems are realized by the conjugate gradient method with AMG as a preconditioner. Let us also recall that the remaining blocks of the preconditioner are realized by LU, where the setup cost is not included in our timings. However, LU does not define efficient preconditioners for the respective blocks. Here instead, multilevel methods could provide order optimality, and this is a topic of current and future research.
\begin{figure}
  \centering
  \includegraphics[width=0.325\textwidth, height=0.15\textheight]{./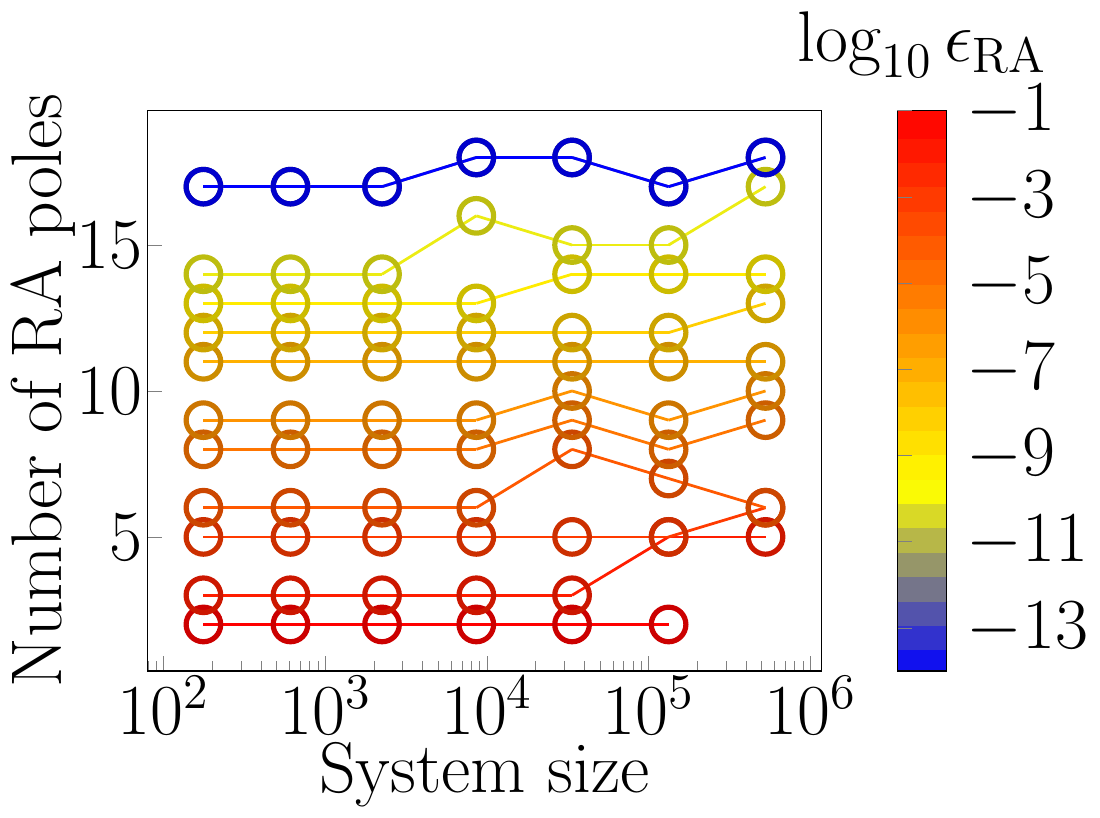}
  \includegraphics[width=0.325\textwidth, height=0.15\textheight]{./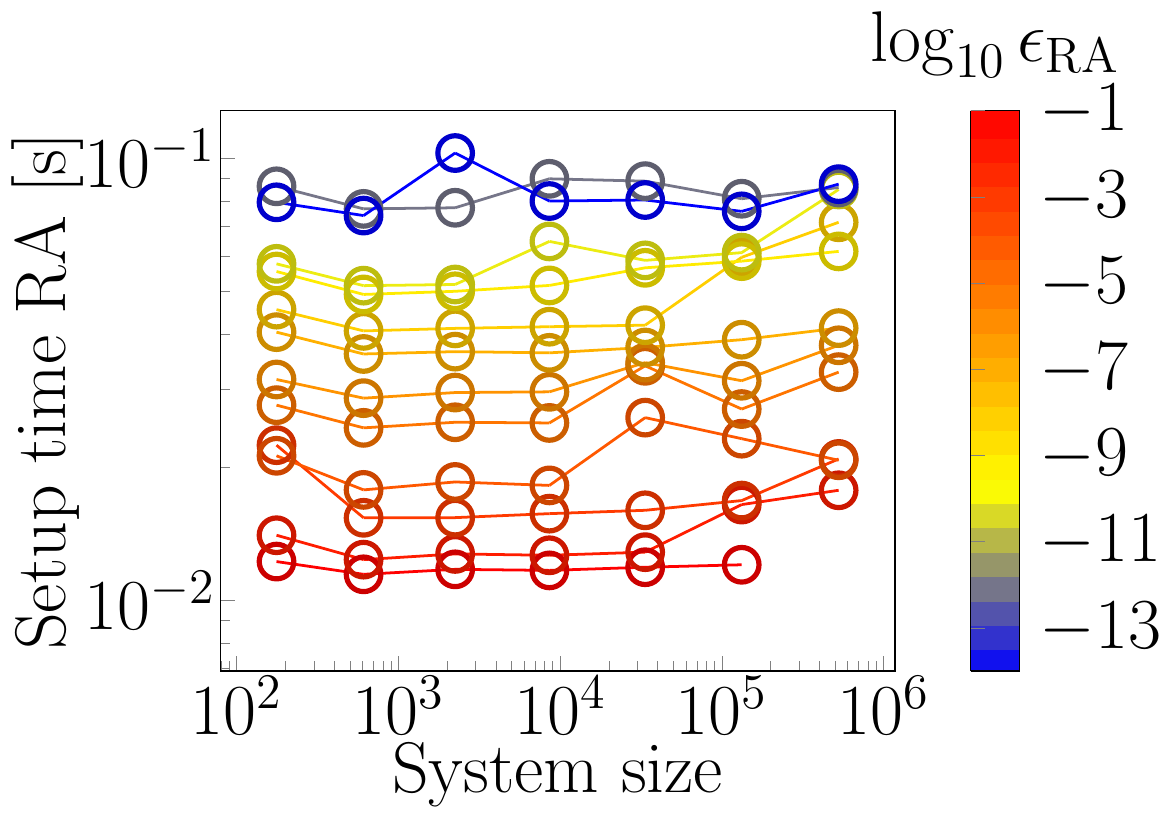}  
  \includegraphics[width=0.325\textwidth, height=0.15\textheight]{./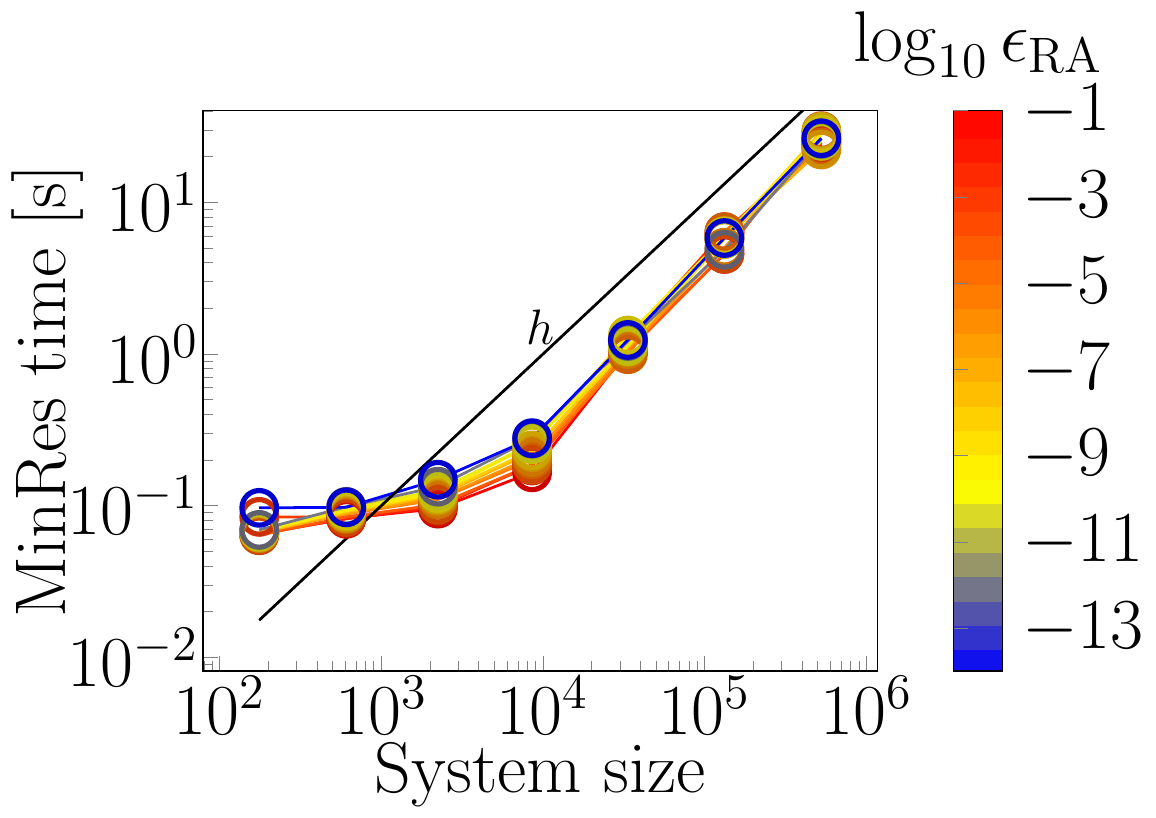}
  \vspace{-10pt}
  \caption{
    Dependence of the number of poles (left), the setup time of RA (center) and runtime of the MinRes solver (right) on mesh size $h$ and RA tolerance $\epsilon_{\text{RA}}$. Parameters in the Darcy--Stokes problem are fixed at $K=10^{-6}$, $\mu=10^{-2}$ and $\alpha=1$. Setup as in \Cref{fig:darcy_stokes_param_robust} is used with discretization by $\mathbb{BDM}_1$-$\mathbb{BDM}_1$-$\mathbb{P}^{\text{disc}}_{0}$-$\mathbb{P}^{\text{disc}}_{0}$-$\mathbb{P}^{\text{disc}}_1$ elements.    
  }
  \label{fig:darcy_stokes_ra_perf}
\end{figure}

\section{Conclusions}\label{sec:conclusions}
We have demonstrated that RA provides order optimal preconditioners for sums of fractional powers of SPD operators and can thus be utilized to construct parameter robust and order optimal preconditioners for multiphysics problems. The results are of practical interest for constructing efficient preconditioning on interfaces in models for which fractional weighted Sobolev spaces are the natural setting for the resulting differential operators, for example, when flow interacts with porous media. The techniques presented here could aid the numerical simulations in a wide range of biology, medicine, and engineering applications.   

\section{Acknowledgments} 
A. Budi\v{s}a, M. Kuchta, K.~A. Mardal, and L.~Zikatanov acknowledge the financial support from the SciML project funded by the Norwegian Research Council grant 102155. The work of X. Hu is partially supported by the National Science Foundation under grant DMS-2208267. M.~Kuchta acknowledges support from the Norwegian Research Council grant 303362. The work of L.~Zikatanov is supported in part by the U.S.--Norway Fulbright Foundation and the U.S. National Science Foundation grant DMS-2208249. The collaborative efforts of Hu and Zikatanov were supported in part by the NSF DMS-2132710 through the Workshop on Numerical Modeling with Neural Networks, Learning, and Multilevel FE.

\bibliographystyle{splncs04}
\bibliography{references}

\begin{thebibliography}{10}
\providecommand{\url}[1]{\texttt{#1}}
\providecommand{\urlprefix}{URL }
\providecommand{\doi}[1]{https://doi.org/#1}

\bibitem{hazmath}
Adler, J., Hu, X., Zikatanov, L.: {HAZ}math solver library
  \url{https://github.com/HAZmathTeam/hazmath}

\bibitem{ambartsumyan2018lagrange}
Ambartsumyan, I., Khattatov, E., Yotov, I., Zunino, P.: A {L}agrange multiplier
  method for a {S}tokes--{B}iot fluid--poroelastic structure interaction model.
  Numerische Mathematik  \textbf{140}(2),  513--553 (2018)

\bibitem{2014AyusodeDiosBrezziMariniXuZikatanov}
Ayuso, B., Brezzi, F., Marini, L.D., Xu, J., Zikatanov, L.: A simple
  preconditioner for a discontinuous {G}alerkin method for the {S}tokes
  problem. Journal of Scientific Computing  \textbf{58}(3),  517--547 (2014)

\bibitem{2015BonitoPasciak-a}
Bonito, A., Pasciak, J.E.: Numerical approximation of fractional powers of
  elliptic operators. Math. Comp.  \textbf{84}(295),  2083--2110 (2015)

\bibitem{boon2022parameter}
Boon, W.M., Hornkj{\o}l, M., Kuchta, M., Mardal, K.A., Ruiz-Baier, R.:
  Parameter-robust methods for the {B}iot--{S}tokes interfacial coupling
  without {L}agrange multipliers. Journal of Computational Physics
  \textbf{467},  111464 (2022)

\bibitem{boon2022robust}
Boon, W.M., Koch, T., Kuchta, M., Mardal, K.A.: Robust monolithic solvers for
  the {S}tokes--{D}arcy problem with the {D}arcy equation in primal form. SIAM
  Journal on Scientific Computing  \textbf{44}(4),  B1148--B1174 (2022)

\bibitem{1stAMG}
Brandt, A., McCormick, S.F., Ruge, J.W.: Algebraic multigrid {(AMG)} for
  automatic multigrid solutions with application to geodetic computations.
  Tech. rep., Inst. for Computational Studies, Fort Collins, CO (October 1982)

\bibitem{brezzi1985two}
Brezzi, F., Douglas, J., Marini, L.D.: Two families of mixed finite elements
  for second order elliptic problems. Numerische Mathematik  \textbf{47}(2),
  217--235 (1985)

\bibitem{2016ChenNochettoOtarolaSalgado-a}
Chen, L., Nochetto, R.H., Ot\'{a}rola, E., Salgado, A.J.: Multilevel methods
  for nonuniformly elliptic operators and fractional diffusion. Math. Comp.
  \textbf{85}(302),  2583--2607 (2016)

\bibitem{di2011mathematical}
Di~Pietro, D.A., Ern, A.: Mathematical aspects of discontinuous Galerkin
  methods, vol.~69. Springer Science \& Business Media (2011)

\bibitem{lazarov2020_time}
Duan, B., Lazarov, R.D., Pasciak, J.E.: Numerical approximation of fractional
  powers of elliptic operators. IMA J. Numer. Anal.  \textbf{40}(3),
  1746--1771 (2020)

\bibitem{2020HarizanovLazarovMargenovMarinov-a}
Harizanov, S., Lazarov, R., Margenov, S., Marinov, P.: Numerical solution of
  fractional diffusion-reaction problems based on {BURA}. Comput. Math. Appl.
  \textbf{80}(2),  316--331 (2020)

\bibitem{2020HarizanovLazarovMargenovMarinovPasciak-a}
Harizanov, S., Lazarov, R., Margenov, S., Marinov, P., Pasciak, J.: Analysis of
  numerical methods for spectral fractional elliptic equations based on the
  best uniform rational approximation. J. Comput. Phys.  \textbf{408},  109285,
  21 (2020)

\bibitem{harizanov2022rational}
Harizanov, S., Lirkov, I., Margenov, S.: Rational approximations in robust
  preconditioning of multiphysics problems. Mathematics  \textbf{10}(5), ~780
  (2022)

\bibitem{hofreitherUnifiedViewNumerical2020}
Hofreither, C.: A unified view of some numerical methods for fractional
  diffusion. Computers \& Mathematics with Applications  \textbf{80}(2),
  332--350 (Jul 2020)

\bibitem{2021Hofreither-a}
Hofreither, C.: An algorithm for best rational approximation based on
  barycentric rational interpolation. Numer. Algorithms  \textbf{88}(1),
  365--388 (2021)

\bibitem{holter2020robust}
Holter, K.E., Kuchta, M., Mardal, K.A.: Robust preconditioning of
  monolithically coupled multiphysics problems. arXiv preprint arXiv:2001.05527
   (2020)

\bibitem{hong2016robust}
Hong, Q., Kraus, J., Xu, J., Zikatanov, L.: A robust multigrid method for
  discontinuous {G}alerkin discretizations of {S}tokes and linear elasticity
  equations. Numerische Mathematik  \textbf{132}(1),  23--49 (2016)

\bibitem{kuchta2021assembly}
Kuchta, M.: Assembly of multiscale linear {PDE} operators. In: Numerical
  Mathematics and Advanced Applications ENUMATH 2019, pp. 641--650. Springer
  (2021)

\bibitem{kuchta2021analysis}
Kuchta, M., Laurino, F., Mardal, K.A., Zunino, P.: Analysis and approximation
  of mixed-dimensional {PDE}s on {3D-1D} domains coupled with {L}agrange
  multipliers. SIAM Journal on Numerical Analysis  \textbf{59}(1),  558--582
  (2021)

\bibitem{kuchta2019preconditioning}
Kuchta, M., Mardal, K.A., Mortensen, M.: Preconditioning trace coupled 3d-1d
  systems using fractional {L}aplacian. Numerical Methods for Partial
  Differential Equations  \textbf{35}(1),  375--393 (2019)

\bibitem{kuchta2016preconditioners}
Kuchta, M., Nordaas, M., Verschaeve, J.C., Mortensen, M., Mardal, K.A.:
  Preconditioners for saddle point systems with trace constraints coupling 2$d$
  and 1$d$ domains. SIAM Journal on Scientific Computing  \textbf{38}(6),
  B962--B987 (2016)

\bibitem{lamichhane2004mortar}
Lamichhane, B.P., Wohlmuth, B.I.: Mortar finite elements for interface
  problems. Computing  \textbf{72}(3),  333--348 (2004)

\bibitem{layton2002coupling}
Layton, W.J., Schieweck, F., Yotov, I.: Coupling fluid flow with porous media
  flow. SIAM Journal on Numerical Analysis  \textbf{40}(6),  2195--2218 (2002)

\bibitem{mardal2012block}
Mardal, K.A., Haga, J.B.: Block preconditioning of systems of {PDEs}. In:
  Automated solution of differential equations by the finite element method,
  pp. 643--655. Springer (2012)

\bibitem{nakatsukasaAAAAlgorithmRational2018}
Nakatsukasa, Y., S{\`e}te, O., Trefethen, L.N.: The {{AAA Algorithm}} for
  {{Rational Approximation}}. SIAM Journal on Scientific Computing
  \textbf{40}(3),  A1494--A1522 (Jan 2018)

\bibitem{2016NochettoOtarolaSalgado-a}
Nochetto, R.H., Ot\'{a}rola, E., Salgado, A.J.: A {PDE} approach to space-time
  fractional parabolic problems. SIAM J. Numer. Anal.  \textbf{54}(2),
  848--873 (2016)

\bibitem{2003Stahl-a}
Stahl, H.R.: Best uniform rational approximation of {$x^\alpha$} on {$[0,1]$}.
  Acta Math.  \textbf{190}(2),  241--306 (2003)

\bibitem{stenberg1995some}
Stenberg, R.: On some techniques for approximating boundary conditions in the
  finite element method. Journal of Computational and applied Mathematics
  \textbf{63}(1-3),  139--148 (1995)

\bibitem{tveito2017cell}
Tveito, A., Jæger, K.H., Kuchta, M., Mardal, K.A., Rognes, M.E.: A cell-based
  framework for numerical modeling of electrical conduction in cardiac tissue.
  Frontiers in Physics  \textbf{5} (2017)

\bibitem{wathen1987realistic}
Wathen, A.J.: Realistic eigenvalue bounds for the {G}alerkin mass matrix. IMA
  Journal of Numerical Analysis  \textbf{7}(4),  449--457 (1987)

\bibitem{2017XuZikatanov}
Xu, J., Zikatanov, L.: Algebraic multigrid methods. Acta Numer.  \textbf{26},
  591--721 (2017)

\end{thebibliography}

\end{document}